\documentclass[a4paper, 11pt]{amsart}

\usepackage{graphicx}
\usepackage[lmargin=1in,rmargin=1in,tmargin=1.25in,bmargin=1in]{geometry}
\usepackage{mathtools}
\usepackage[hidelinks]{hyperref}

\newtheorem{theorem}{Theorem} [section]
\newtheorem{lemma}[theorem]{Lemma}
\newtheorem{proposition}[theorem]{Proposition}
\newtheorem{corollary}[theorem]{Corollary}
\newtheorem{conjecture}[theorem]{Conjecture}

\theoremstyle{remark}
\newtheorem{remark}[theorem]{Remark}

\DeclareMathOperator\coker{coker}

\newcommand{\bz}{\mathbb Z}
\newcommand{\F}{\mathbb F}
\newcommand{\cT}{\mathcal T}

\newcommand{\con}{\mathcal C}
\newcommand{\cont}{\mathcal{C}^{TOP}}
\newcommand{\conts}{\mathcal {T}}

\begin{document}

\title[On independence of iterated doubles in the concordance group]{On independence of iterated Whitehead doubles in the knot concordance group}

\author{Kyungbae Park}

\address{School of Mathematics\\Korea Institute for Advanced Study\\Seoul 02455, Republic of Korea}
\email{kbpark@kias.re.kr}
\urladdr{\url{http://newton.kias.re.kr/~kbpark/}}

\begin{abstract}
Let $D(K)$ be the positively-clasped untwisted Whitehead double of a knot $K$, and $T_{p,q}$ be the $(p,q)$ torus knot. We show that $D(T_{2,2m+1})$ and $D^2(T_{2,2m+1})$ are linearly independent in the smooth knot concordance group $\con$ for each $m\geq 2$. Further, $D(T_{2,5})$ and $D^2(T_{2,5})$ generate a $\bz\oplus\bz$ summand in the subgroup of $\con$ generated by topologically slice knots. We use the concordance invariant $\delta$ of Manolescu and Owens, using Heegaard Floer correction term. Interestingly, these results are not easily shown using other concordance invariants such as the $\tau$-invariant of knot Floer theory and the $s$-invariant of Khovanov homology. We also determine the infinity version of the knot Floer complex of $D(T_{2,2m+1})$ for any $m\geq 1$ generalizing a result for $T_{2,3}$ of Hedden, Kim and Livingston.
\end{abstract}

\maketitle

\section{Introduction}
A knot $K$ in $S^3$ is called \emph{smoothly} (resp. \emph{topologically}) \emph{slice} if it bounds a smoothly (resp. locally flatly) embedded disk in $B^4$. Two knots $K_1$ and $K_2$ are called \emph{smoothly} (resp. \emph{topologically}) \emph{concordant} if $K_1\# -K_2$ is smoothly (resp. topologically) slice, where $-K$ is the mirror of $K$ with reversed orientation. Modulo smooth (resp. topological) concordance, the set of knots forms an abelian group, called the \emph{smooth} (resp. \emph{topological}) \emph{knot concordance group}, denoted by $\con$ (resp. $\con^{TOP}$).

Note that every smoothly slice knot is topologically slice, and hence we can consider the natural map from $\con$ to $\cont$. We denote the kernel of the map by $\conts$. Observe that non-trivial elements in $\conts$ are represented by knots which are topologically slice but not smoothly: hence $\conts$ portrays the remarkable distinction between the smooth and topological categories in dimension four. In particular a knot which is topologically slice but not smoothly, can be used to construct an exotic $\mathbb{R}^4$ (a manifold which is homeomorphic but not diffeomorphic to $\mathbb{R}^4$) \cite[Exercise 9.4.23]{Gompf-Stipsicz:1999}.

One rich source of non-trivial elements in $\conts$ has been the satellite operation called (positively-clasped untwisted) \emph{Whitehead doubling}, which is defined by the pattern in Fig.\@ \ref{fig:Whiteheaddouble}. By a result of Freedman \cite{Freedman:1982-1} the Whitehead double $D(K)$ of any knot $K$ is topologically slice, and hence its class is contained in $\conts$. It is thus important to understand the concordance properties of Whitehead double knots and also the effect of the operation on $\mathcal{C}$, and there is a long-standing conjecture.

\begin{figure}[t!]
	\centering
	\includegraphics[width=\textwidth]{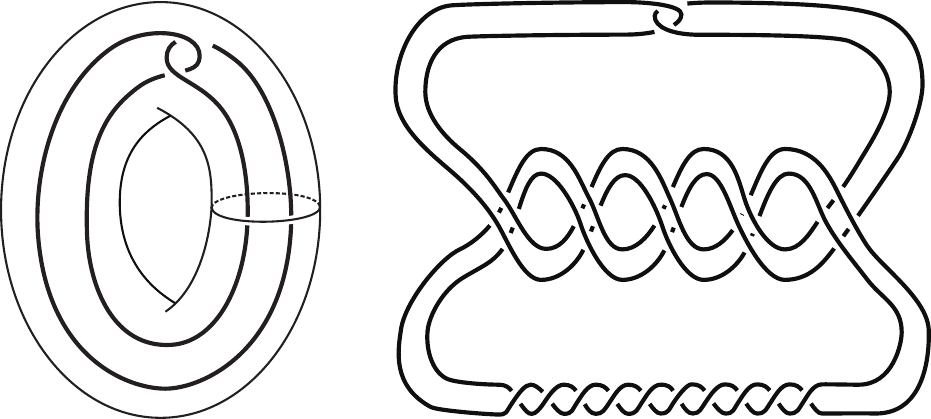}
	\caption{The pattern of the positively-clasped untwisted Whitehead double and the Whitehead double of the $(2,5)$ torus knot. The $-5$ extra full twists arise from untwisting the writhe of the projection of the $(2,5)$ torus knot.}
	\label{fig:Whiteheaddouble}
\end{figure}

\begin{conjecture}[{\cite[Problem 1.38]{Kirby:1997}}] $D(K)$ is smoothly slice if and only if $K$ is smoothly slice.\label{conj:Kirby}\end{conjecture}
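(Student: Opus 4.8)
The plan is to treat the two implications separately, since one is elementary while the other is the genuinely conjectural content. For the direction ``$K$ slice $\Rightarrow D(K)$ slice'' I would give a direct surface construction. Suppose $K$ bounds a smooth slice disk $\Delta\subset B^4$. The normal bundle of $\Delta$ is trivial, so a parallel pushoff of $\Delta$ produces a second disjoint embedded disk $\Delta'$, and the framing induced on the boundary is exactly the $0$-framing; thus $\partial\Delta=K$ and $\partial\Delta'=K^{+}$, the $0$-framed pushoff. This is precisely where the \emph{untwisted} hypothesis enters: the untwisted double $D(K)$ is obtained from the two-component link $K\sqcup K^{+}$ by attaching a single clasped band $b$ in $S^3$, and since $K^{+}$ bounds the parallel disk $\Delta'$, that same band is available. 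The clasp is a feature of the embedding of $b$ and introduces no self-intersection, so pushing $b$ into $B^4$ and attaching it to $\Delta\cup\Delta'$ yields a connected embedded surface $\Sigma$ with $\partial\Sigma=D(K)$ and $\chi(\Sigma)=1+1-1=1$. Hence $\Sigma$ is a smooth disk and $D(K)$ is slice.

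For the reverse direction I would not attempt a geometric construction; the realistic strategy is obstruction-theoretic. Because $D(K)$ has the Alexander polynomial of the unknot, it is \emph{topologically} slice by Freedman, so any obstruction to smooth sliceness must be a smooth concordance invariant. The program is therefore to exhibit, for each non-slice $K$, an invariant that is nonzero on $D(K)$: the natural candidates are $\tau$ from knot Floer homology, $s$ from Khovanov homology, and the Manolescu--Owens invariant $\delta$ featured in this paper. Concretely, one would compute such an invariant from the knot Floer complex of the satellite $D(K)$ and relate it to data of the companion $K$.

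The main obstacle --- and the reason the statement remains a conjecture rather than a theorem --- is that the doubling operator $D$ destroys most of the information carried by $K$. Invariants such as $\tau(D(K))$ and $s(D(K))$ turn out to depend only on very coarse data of $K$ (essentially the sign of a single invariant), so they vanish for large families of non-slice companions and cannot obstruct sliceness there. Consequently the achievable goal is not to settle the conjecture in general but to enlarge the class of $K$ for which it holds; this is exactly where $\delta$ becomes useful, since, as the abstract indicates, a direct determination of the ($\infty$-version of the) knot Floer complex of $D(T_{2,2m+1})$ lets $\delta$ detect non-sliceness in cases where $\tau$ and $s$ give no information.
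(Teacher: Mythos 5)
The statement you are asked about is not a theorem of the paper: it is Kirby's Problem 1.38, recorded as Conjecture~\ref{conj:Kirby}, and the paper offers no proof of it --- indeed it explicitly remarks that the conjecture is open even for the left-handed trefoil and the figure-eight knot. So there is no proof in the paper to compare against, and any complete ``proof'' would have to be wrong or would constitute a major new result. Your treatment of the easy implication ($K$ smoothly slice $\Rightarrow D(K)$ smoothly slice) is essentially correct and standard: the slice disk $\Delta$ has trivial normal bundle, a parallel copy $\Delta'$ realizes the $0$-framed pushoff (disjointness of $\Delta$ and $\Delta'$ forces the induced framing to be the Seifert framing), and banding the two disks together along the clasped band, pushed slightly into $B^4$, yields a connected orientable surface with one boundary component and $\chi=1$, i.e.\ a slice disk for $D(K)$. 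The one point to state carefully is orientation: it is $K\cup (-K^{+})$, the boundary of the untwisted annulus, that bounds $\Delta\sqcup\Delta'$ compatibly, and the band sum is taken accordingly. (An alternative one-line argument: satellite operations with fixed pattern and framing descend to concordance, and $D(\mathrm{unknot})$ is the unknot.)

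The genuine gap is the converse, which is the entire content of the conjecture, and your proposal does not prove it --- it only describes an obstruction-theoretic program and correctly diagnoses why it stalls. Since $\Delta_{D(K)}(t)=1$, every topological concordance invariant vanishes on $D(K)$, and the smooth invariants that are bounded by the slice genus ($\tau$, $s/2$, $-dS^3_1/2$) can take only the values $0$ or $\pm1$ on $D(K)$; by Theorem~\ref{thm:Hedden}, $\tau(D(K))$ sees only the sign of $\tau(K)$, so these tools cannot obstruct sliceness whenever, say, $\tau(K)\le 0$. You should also note that the invariant $\delta$ featured in this paper does not rescue the program: by Theorem~\ref{thm:Manolescu_Owens}, $\delta(D(K))=-4\max\{\tau(K),0\}$ for alternating $K$, which again vanishes for the figure-eight knot and the left-handed trefoil. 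The paper uses $\delta$ for a different purpose --- distinguishing $D(K)$ from $D^2(K)$ in $\mathcal{C}_{TS}$ --- not to attack Conjecture~\ref{conj:Kirby}. In short: one direction of your argument is a correct proof of a known easy fact, but the statement as a whole remains a conjecture, and no amount of the machinery in this paper closes it.
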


Note that it is still unknown, as far as the author knows, if the conjecture is true even for some simple knots such as the left-hand trefoil or the figure-eight knot, i.e. if Whitehead doubles of them are smoothly slice or not. This shows that it is sometimes challenging to prove non-sliceness of the Whitehead double of a knot (or more generally of a topologically slice knot). Many classical concordance invariants, which is applied in the topological locally flat category, such as the knot signature, Levine's homomorphism \cite{Levine:1969}, and Casson-Gordon invariants \cite{Casson-Gordon:1975}, are not effective to study the sliceness of Whitehead double knots since these invariants vanish for those knots. It was not until early 1980's that the first example of topologically but not smoothly slice knot was known, due to the unpublished result of Akbulut and Casson. They used an obstruction from Donaldson's gauge theory to show that certain topologically slice knot, such as the Whitehead double of the right-handed trefoil knot, is not smoothly slice. 

Although many gauge theoretic techniques have allowed us to show non-sliceness of Whitehead doubles, one can ask further if there are pairs of Whitehead doubles which are linearly independent in $\conts$, or more strongly, if Whitehead doubles can generate a higher-rank summand in $\conts$. In \cite{Hedden-Kirk:2012-1}, Hedden and Kirk made use of Chern-Simon gauge theory to show Whitehead double knots can generate $\bz^\infty$-subgroup in $\conts$. Another direction to study these questions using recently developed knot concordance invariants from Heegaard Floer theory. The theory has provided manifestly \emph{smooth} concordance invariants, some of which give homomorphisms from $\mathcal{C}$ to $\mathbb{Z}$. One is the $\tau$-invariant, defined using the knot Floer homology of Ozsv\'{a}th-Szab\'{o} \cite{Ozsvath-Szabo:2003-3,  Ozsvath-Szabo:2004-1} and Rasmussen \cite{Rasmussen:2003-1}. Manolescu-Owens discovered another concordance invariant $\delta$, twice the Heegaard Floer correction term ($d$-invariant) of the double cover of $S^3$ branched over a knot \cite{Manolescu-Owens:2007-1}. More recently, Peters studied the concordance invariant $d(S^3_1(K))$ given by the correction term of $1$-surgery on $K\subset S^3$ \cite{Peters:2010-1}. In contrast to the other two invariants, $dS^3_1$ does not induce a homomorphism to $\mathbb{Z}$. We refer to a survey paper \cite{Hom:2017} of Hom about more recent development of Heegaard Floer theory on the knot concordance. Rasmussen's $s$-invariant coming from Khovanov homology is also a powerful smooth concordance homomorphism \cite{Rasmussen:2010-1}.

Even though there are many smooth concordance invariants, most of them are still ineffective for distinguishing Whitehead doubles in $\mathcal{C}$. The invariants $|\tau|$, $-dS^3_1/2$ and $|s/2|$ are known to be bounded above by the slice genus (four-ball genus) of the knot: the minimal genus of smoothly embedded surfaces in $B^4$ bounded by $K\subset\partial B^4$. Since the slice genus of $D(K)$ is at most one for any knot $K$, so are $|\tau|$, $-dS^3_1/2$ and $|s/2|$ of $D(K)$. Moreover, $\tau(D(K))$ is determined by $\tau(K)$ followed by a result of Hedden below.

\begin{theorem}[{\cite[Theorem 1.5]{Hedden:2007-1}}]
	\begin{equation*}
	\tau(D(K)) = \begin{cases} 0, & \mbox{for}\hspace{0.3cm} \tau(K)\leq 0\\1, & \mbox{for}\hspace{0.3cm} \tau(K)> 0.\end{cases}
	\end{equation*}
	\label{thm:Hedden}
\end{theorem}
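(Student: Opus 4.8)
The plan is to first bound $\tau(D(K))$ and then to pin down its exact value by a knot Floer computation. Since the standard genus-one Seifert surface of the positively-clasped double $D(K)$ exhibits slice genus at most one, the bound $|\tau|\le g_4$ quoted above already gives $|\tau(D(K))|\le 1$. To improve this to $\tau(D(K))\in\{0,1\}$, I would invoke the crossing-change inequality for $\tau$: a single crossing change at the clasp turns $D(K)$ into the unknot $U$ (the Whitehead pattern with its clasp removed bounds a disk in the solid torus), so $D(K)$ is obtained from $U$ by changing a negative crossing into a positive one. Writing $D(K)=K_+$ and $U=K_-$, the inequality $\tau(K_-)\le\tau(K_+)\le\tau(K_-)+1$ yields $0=\tau(U)\le\tau(D(K))\le 1$. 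Everything then reduces to deciding, as a function of $K$, whether this value is $0$ or $1$.

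The core of the argument is a computation of enough of the filtered complex $\widehat{CFK}(D(K))$ to read off $\tau$. I would exploit the skein triangle at the clasp: with $L_+=D(K)$ and $L_-=U$, the oriented resolution $L_0$ is the two-cable of $K$, i.e.\ two parallel untwisted copies of $K$, a two-component link. Feeding this into the skein exact sequence for knot Floer homology and carefully tracking the absolute Maslov and Alexander gradings reduces the determination of $\widehat{HFK}_*(D(K),1)$ in the top Alexander grading to filtered information about the two-cable, and hence ultimately to the full complex $\widehat{CFK}^\infty(K)$. The genus-one hypothesis is precisely what makes the top Alexander grading the only one relevant for $\tau$.

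Finally I would extract $\tau$. Recall that $\tau(D(K))$ is the minimal Alexander filtration level supporting the image of the generator of $\widehat{HF}(S^3)\cong\mathbb F$. The decisive point is that the homology class in the top Alexander grading of the cable threading through this construction is governed exactly by the sign of $\tau(K)$: when $\tau(K)>0$, the relevant element of $\widehat{CFK}(K)$ is forced to the top filtration level, pushing the surviving generator up to filtration $1$; whereas for $\tau(K)\le 0$ it already appears at filtration $0$, giving $\tau(D(K))=0$. The dichotomy in the statement is precisely this dichotomy at $\tau(K)=0$ versus $\tau(K)>0$.

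I expect the main obstacle to be twofold. First, the grading bookkeeping in the skein exact sequence, together with the explicit filtered computation of the knot Floer homology of the two-cable, is itself substantial. Second, and more delicately, one must match the surviving generator to the sign of $\tau(K)$ at the boundary case $\tau(K)=0$, where the two possible values of $\tau(D(K))$ are separated; this is where the positivity of the clasp and the precise location within $\widehat{CFK}(K)$ of the element computing $\tau(K)$ must be reconciled.
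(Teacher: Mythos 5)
First, a point of reference: the paper does not prove this statement at all --- it is imported verbatim as \cite[Theorem 1.5]{Hedden} --- so the only meaningful comparison is with Hedden's original argument. Your opening step is correct and is indeed part of that argument: the genus-one Seifert surface gives $|\tau(D(K))|\le 1$, and since $D(K)$ becomes the unknot after changing a single crossing at the clasp, the crossing-change inequality $\tau(K_-)\le\tau(K_+)\le\tau(K_-)+1$ forces $\tau(D(K))\in\{0,1\}$. The problem is that everything after this is where the theorem actually lives, and your sketch does not supply it.

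Hedden does not argue via the skein exact sequence at the clasp. He computes $\widehat{HFK}_*(D_+(K,t),1)$ by a satellite argument (a doubly-pointed Heegaard diagram adapted to the Whitehead pattern in the solid torus), which identifies the top filtration level of $\widehat{CFK}(D_+(K,t))$ with sums of the subcomplexes $\widehat{CFK}(K)_{\{j\le s\}}$ appearing in the large-$N$ surgery formula; crucially, the inclusion-induced maps relevant to computing $\tau(D(K))$ are identified with the maps $\iota^s_K$ of Section \ref{sec:preliminary}, whose nontriviality is \emph{by definition} governed by the threshold $s\ge\tau(K)$. That identification is the mechanism by which $\tau(K)$, rather than merely $\widehat{HFK}(K)$ or $\Delta_K$, enters the answer, and it is exactly the step your proposal asserts rather than proves (``the homology class \dots is governed exactly by the sign of $\tau(K)$'' is the theorem, not an observation). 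Two further concrete problems: (i) the oriented resolution $L_0$ at the clasp cannot be the coherently oriented two-cable you describe, since the oriented resolution preserves the winding number in the solid torus, which is $0$ for the double and $2$ for that cable; $L_0$ is the anti-parallel two-cable (the boundary of the untwisted annulus, up to the residual clasp crossing), whose Alexander gradings and filtered invariants behave completely differently and are themselves only accessible through the same satellite technology you are trying to bypass. (ii) Even granting a full computation of $\widehat{HFK}(D(K))$ as a bigraded group, $\tau$ does not follow: it is an invariant of the $\bz\oplus\bz$-filtered chain homotopy type, and the skein exact sequence controls neither the differentials on $\widehat{CFK}(D(K))$ nor the map $H_*(\widehat{CFK}(D(K))_{\{j\le 0\}})\rightarrow\widehat{HF}(S^3)$ whose nontriviality defines $\tau(D(K))=0$ versus $1$. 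As written, the proposal establishes only $\tau(D(K))\in\{0,1\}$ and leaves the dichotomy unproved.
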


In particular, $\tau(D^n(K))$ is identically either 0 or 1 for any $n\geq 1$ and is determined by $\tau(K)$, where $D^n(K)$ denotes the $n$th iterated positively clapsed untwisted Whitehead double of $K$. For the case of $s$-invariant, it is known by Livingston and Naik \cite[Theorem 2]{Livingston-Naik:2006-1} that $s(D(K))$ is sometimes determined by the Thurston-Bennequin number $TB$ of $K$ or $-K$. More precisely,
\begin{equation*}
-s(D(K))/2 = \begin{cases} 0 & \mbox{if}\hspace{0.3cm} TB(-K)\geq 0\\1 & \mbox{if}\hspace{0.3cm} TB(K)\geq 0.\end{cases}
\end{equation*}
Note that $TB(K)+TB(-K)\leq-1$. However, as far as the author knows, there is no known technique to determine the $s$-invariants of iterated Whitehead doubles in general. Therefore, it is interesting to ask if it is possible to distinguish the $D^n(K)$'s in $\con$. Using $\delta$-invariants, which are not constrained by the slice genus, we show:

\begin{theorem}\label{thm:general}
	Suppose $K$ is a knot in $S^3$. If $|\delta(D(K))|>8$, then $D(K)$ and $D^n(K)$ are not concordant for each $n\geq 2$. If $|\delta(D(K))|>8$ and $\tau(K)>0$, then $D(K)$ and $D^n(K)$ are linearly independent in $\con$ for each $n\geq 2$. 
\end{theorem}

For alternating knots, $\delta(D(K))$ is determined by the $\tau$ of $K$ by \cite[Theorem 1.5.]{Manolescu-Owens:2007-1}, and hence we have the following immediate corollary.
\begin{corollary}\label{cor:alternating}
	Suppose $K$ is an alternating knot in $S^3$. If $\tau(K)>2$, then $D(K)$ and $D^n(K)$ are linearly independent in $\con$ for each $n\geq 2$.
\end{corollary}

Let $T_{p,q}$ denote the $(p,q)$ torus knot. It is well known that $\tau(T_{2,2m+1})=m$. See the end of Section \ref{sec:staircase}. Therefore when $m>2$, it follows from the corollary above that $D(T_{2,2m+1})$ and $D^n(T_{2,2m+1})$ are linearly independent in $\con$ for each $n\geq 2$. In the case of $T_{2,5}$, for which $|\delta(D(T_{2,5}))|=8$, we study further to compute $\delta(D^2(T_{2,5}))$ and answer a question about the existence of a summand in $\conts$. 

\begin{theorem}\label{thm:torus}
	For each $m\geq 2$, $D(T_{2,2m+1})$ and $D^2(T_{2,2m+1})$ are linearly independent in $\con$. Furthermore, there exists a  $\mathbb{Z}\oplus\mathbb{Z}$-summand of $\conts$ generated by $D(T_{2,5})$ and $D^2(T_{2,5})$.
\end{theorem}

A tool for this result is a computation of the full knot Floer chain complex, $CFK^\infty$, of $D(T_{2,5})$. Here we study $CFK^\infty$  for $D(T_{2,2m+1})$, in general.
\begin{theorem}\label{thm:complex}
	For any $m\geq 1$, the chain complex $CFK^\infty(D(T_{2,2m+1}))$ is filtered chain homotopy equivalent to the chain complex $CFK^\infty(T_{2,3})\oplus A$, where $A$ is an acyclic complex. 
\end{theorem}

Recently, Cochran-Harvey-Horn suggested a bipolar filtration of $\con$ and the induced filtration of $\conts$ \cite{Cochran-Harvey-Horn:2013-1}, 
\[
	\{0\}\subset\dots\subset\cT_{n+1}\subset\cT_n\subset\dots\subset\cT_0\subset\cT=\conts.
\] 
Since $\tau$ of $D(K)$ and $D^2(K)$ are nonzero for knots $K$ in Theorem \ref{thm:torus}, both of them represent non-trivial classes in the same filtration level $\cT/ \cT_0$ by \cite[Corollary 4.9]{Cochran-Harvey-Horn:2013-1}. Using our knots, we get the following corollary related to the filtration. Let $\mathcal{C}_{\Delta}$ be the subgroup of $\conts$ generated by knots with trivial Alexander polynomial.  

\begin{corollary}
	There is a $\bz\oplus\bz$ summand of $\mathcal{C}_{\Delta}/(\mathcal{C}_{\Delta}\cap\cT_0)$. \label{cor:bipolar}
\end{corollary}

\begin{remark}
	In \cite[Corollary 1.9]{Manolescu-Owens:2007-1}, Manolescu and Owens showed the existence of a $\bz^2$-summand in $\mathcal{T}$ (actually in $\mathcal{C}_\Delta$) using $\tau$ and $\delta$ invariants for knots, $D(T_{2,3})$ and $D(T_{2,5})$. In \cite{Livingston:2008-1}, Livingston additionally used $s$ invariant to have a $\bz^3$-summand in $\mathcal{C}_\Delta$. More recently, Hom \cite{Hom:2015-1}, and Ozsv\'ath, Stipsicz and Szab\'o \cite{Ozsvath-Stipsicz-Szabo:2014} independently showed the existence of a $\bz^\infty$-summand in $\conts$ by using their $\epsilon$ and $\Upsilon$ invariant derived from knot Floer homology. See also a result of Kim and the author about the existence of a $\bz^\infty$-summand in $\con_\Delta$ \cite{Kim-Park:2016}. Also notice that Manolescu and Owens', and Kim and the author's results in fact show the existence of a $\bz^2$ and $\bz^\infty$-summand in $\mathcal{C}_\Delta/(\mathcal{C}_\Delta\cap\mathcal{T}_0)$ respectively. However, none of these techniques can be directly applied to see the independency of iterated Whitehead doubles of a single knot in $\con$. 
\end{remark}

\subsection*{Acknowledgments}
The author would like to thank his PhD supervisors, Matt Hedden for suggesting this problem as well as priceless guidance and Ron Fintushel for encouraging and supporting him. He also thanks Jae Choon Cha and Chuck Livingston for helpful communication and Tim Cochran for pointing out a wrong statement in an earlier version of this paper and suggesting Corollary \ref{cor:bipolar}. Finally, he is grateful to the anonymous referee for his/her valuable and helpful comments.

\section{Preliminaries}
\label{sec:preliminary}
In this section we briefly recall the Heegaard Floer and knot Floer theory, the staircase complexes and some invariants induced by them.

\subsection{Heegaard Floer homology and knot Floer invariants} 
For simplicity, we work with coefficients in  $\mathbb{F}$, the field of two elements. For a rational homology 3-sphere $Y$ equipped with a spin$^c$ structure $\mathfrak{t}$, one can associate to it a relatively $\mathbb{Z}$-graded and filtered chain complex $CF^\infty(Y,\mathfrak{t})$, a finitely and freely generated $\mathbb{F}[U,U^{-1}]$-module. In particular the filtration is obtained by the negative power of $U$, and $U$-multiplication lowers the homological grading by $2$. The filtered chain homotopy type of $CF^\infty(Y,\mathfrak{t})$ is known to be an invariant of $(Y,\mathfrak{t})$. For more detailed and general exposition of the theory we refer to \cite{Ozsvath-Szabo:2004-2, Ozsvath-Szabo:2004-3}.

We set ${CF}^-(Y,\mathfrak{t}):=CF^\infty(Y,\mathfrak{t})_{\{i<0\}}$, the subcomplex consisting of the elements in $CF^\infty(Y,\mathfrak{t})$ whose filtration level $i$ is less than $0$, and also define the quotient and sub-quotient complexes $CF^+(Y,\mathfrak{t}):=CF^\infty(Y,\mathfrak{t})_{\{i\geq0\}}$ and $\widehat{CF}(Y,\mathfrak{t}):=CF^\infty(Y,t)_{\{i=0\}}.$ The homology of $CF^\infty(Y,\mathfrak{t})$ is denoted by $HF^\infty(Y,\mathfrak{t})$, and $HF^-$, $HF^+$ and $\widehat{HF}$ denote the homology of the other chain complexes accordingly. The various versions of Heegaard Floer homologies naturally fit into a long exact sequence:
\begin{equation}
\dots\rightarrow HF^-(Y,\mathfrak{t})\rightarrow HF^\infty(Y,\mathfrak{t})\rightarrow HF^+(Y,\mathfrak{t})\rightarrow\dots
\label{eq:les}
\end{equation}
For $S^3$, it is known that $HF^\infty(S^3)\cong \mathbb{F}[U,U^{-1}]$ and $\widehat{HF}(S^3)\cong \mathbb{F}$ \cite{Ozsvath-Szabo:2004-3}. We usually drop the spin$^c$ structure in the notation if there is a unique one.

A knot $K$ in an integer homology three-sphere $Y$ has an associated $\mathbb{Z}\oplus\mathbb{Z}$ filtered chain complex $CFK^\infty(Y,K)$ which reduces to $CF^\infty(Y)$ after forgetting the second $\mathbb{Z}$ filtration. The $U$-multiplication decreases both of the filtration levels by $1$. The filtered chain homotopy type of $CFK^\infty(Y,K)$ is an invariant of the knot and we refer it as the knot Floer invariant of $(Y,K)$ \cite{Ozsvath-Szabo:2004-1, Rasmussen:2003-1}. We denote by $CFK^\infty(Y,K)_{\{(i,j)\}}$ the subgroup at $(i, j)$-filtration level in $CFK^\infty(Y,K)$ and define  $\widehat{CFK}(Y,K){:=}CFK^\infty(Y,K)_{\{i=0\}}$. It is an easy fact that $H_*(CFK^\infty(Y,K))\cong HF^\infty(Y)$ and $H_*(\widehat{CFK}(Y,K))\cong \widehat{HF}(Y)$. As a consequence, for a knot $K$ in $S^3$, we obtain an induced sequence of maps:
\[\iota^m_{K}\colon H_*(\widehat{CFK}(S^3,K)_{\{j\leq m\}})\rightarrow \widehat{HF}(S^3)\cong\mathbb{F}\]

An invariant $\tau$ for a knot $K$ in $S^3$ is defined by 
\[\tau(K){:=}\min\{m\in\mathbb{Z}{|}\iota^m_{K} \mbox{ is non-trivial}\}.\]
In \cite{Ozsvath-Szabo:2003-3, Rasmussen:2003-1} it was shown that $\tau$ is a concordance invariant. For a knot $K$ in $S^3$ we abbreviate the notations by $CFK^\infty(K):=CFK^\infty(S^3,K)$ and $\widehat{CFK}(K):=\widehat{CFK}(S^3,K)$.

It is useful to visualize a knot Floer complex as a collection of dots and arrows lying in a grid in the plane. In a diagram, a dot in $(i,j)$-coordinate box represents an $\mathbb{F}$-generator in $CFK^\infty(K)_{\{(i,j)\}}$, and an arrow represents the non-trivial map, $\mathbb{F}\rightarrow\mathbb{F}$. The differential is then the sum of the arrows, as a map of vector spaces. See Fig.\@ \ref{fig:cfk} for examples.

\subsection{Staircase complex}
\label{sec:staircase}
For a given $(n-1)$-tuple of positive integers $\mathbf{v}$, a \emph{staircase complex of length $n$}, St$(\mathbf{v})$, is defined as a finitely generated $\mathbb{Z}\oplus\mathbb{Z}$-filtered chain complex over $\mathbb{F}$ with $n$ generators, where the numbers in $\mathbf{v}$ are the length of arrows, which alternate horizontal and vertical starting at the top left generator and moving to the bottom right generator in alternating right and downward steps in a grid diagram. We also locate the top left dot on the vertical axis $(i=0)$ and the bottom right on the horizontal axis $(j=0)$ on the diagram. See \cite{Borodzik-Livingston:2014-1} for more detail. For instance, complexes generated by $\mbox{St}(1,1,1,1)$ and $\mbox{St}(1,2,2,1)$ are shown in Fig.\@ \ref{fig:cfk}. 

The knot Floer invariant is a categorification of Alexander polynomial $\Delta_K(t)$, in the following sense:
\[\Delta_K(t)=\sum_{k\in\mathbb{Z}}\chi(\widehat{CFK}(K)_{\{j=k\}}
)t^k,\]
where $\Delta_K(t)$ is the symmetrized Alexander polynomial of $K$ and $\chi$ is the Euler characteristic.

Conversely, if a knot is an $L$-space knot (including all torus knots), then its knot Floer homology is determined by its Alexander polynomial \cite{Ozsvath-Szabo:2005-1}. Suppose $K$ is an $L$ space knot, then its symmetrized Alexander polynomial has the form, 
\[\Delta_K(t)=\sum_{k=0}^{2m}(-1)^kt^{n_k},\]
where $n_k$ is a decreasing sequence of integers. The knot Floer homology of $K$ is generated by a stair complex, \[CFK^\infty(K)\cong\mbox{St}(n_{i+1}-n_{i})_{i=0}^{2m-1}\otimes\mathbb{F}[U,U^{-1}],\]
where $i$ runs from $0$ to $2m-1$ and $U$-multiplication is naturally extended: i.e. if $x$ is a generator in $(i,j)$-filtration level, then $U^nx$ sits in $(i-n,j-n)$-filtration level, and $\partial(U^nx)=U^n\partial(x)$. Denote $\mbox{St}(K):=\mbox{St}(n_{i+1}-n_i)_{i=0}^{2m-1}$.

For example, since the Alexander polynomial of $T_{2,2m+1}$ is $\sum_{i=-m}^{m}(-1)^it^i$, the full knot Floer chain complex of $T_{2,2m+1}$ is generated by $\mbox{St}(1,\dots,1)$ of length $2m+1$. The knot Floer complex of $T_{3,4}$ can be given by $\mbox{St}(1,2,2,1)\otimes\mathbb{F}[U,U^{-1}]$ from the fact that  $\Delta_{T_{3,4}}(t)=t^{-3}-t^{-2}+1-t^2+t^3$. See Fig.\@ \ref{fig:cfk}. Accordingly, it is easily obtained that $\tau(T_{p,q})=(p-1)(q-1)/2$ from its staircase complex, see also \cite[Corollary 1.7]{Ozsvath-Szabo:2003-3}. In particular, $\tau(T_{2,2m+1})$ is equal to $m$.

\begin{figure}[t!]
	\centering
	\includegraphics[width=\textwidth]{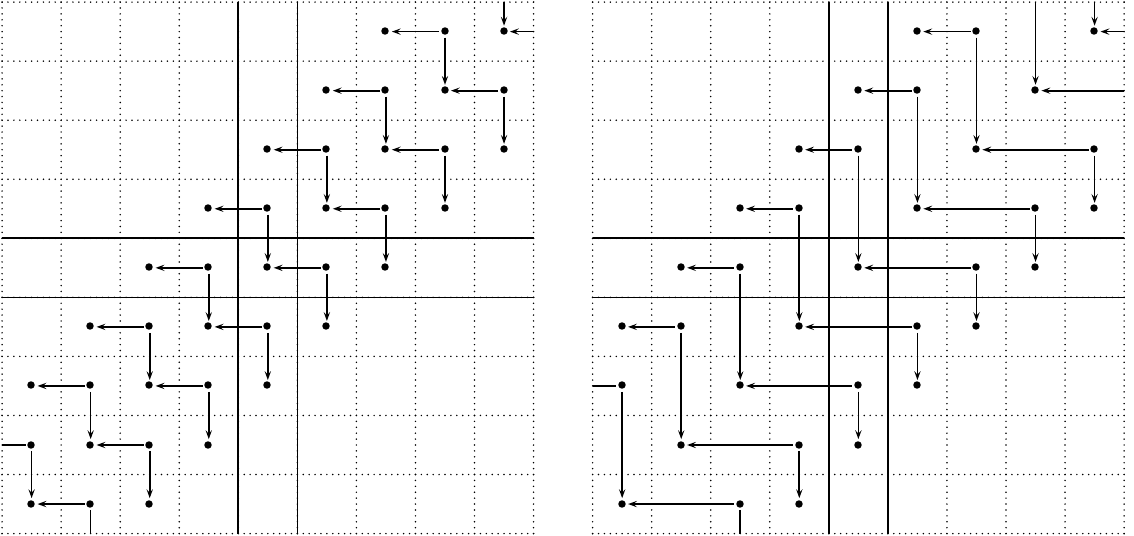}
	\caption{Diagrams of $CFK^\infty(T_{2,5})$ and $CFK^\infty(T_{3,4})$ generated by staircase complexes $\mbox{St}(1,1,1,1)$ and $\mbox{St}(1,2,2,1)$ respectively.}\label{fig:cfk}
\end{figure}

\subsection{Correction term and concordance invariants}
\label{sec:invariants}
If $Y$ is a rational homology 3-sphere, the homological grading in $HF^\circ(Y)$ can be lifted to the \emph{absolute $\mathbb{Q}$-grading} \cite{Ozsvath-Szabo:2006-1}. We usually write down the absolute grading using a subscript with parenthesis, for example, $\widehat{HF}(S^3)\cong\mathbb{F}_{(0)}$. With the help of the absolute grading, \emph{the correction term} (\emph{$d$-invariant}) of $(Y,\mathfrak{t})$ is defined \cite{Ozsvath-Szabo:2003-1}:
\[d(Y,\mathfrak{t}){:}{=}\min\{{gr}(\xi){|}\xi\in HF^\infty(Y,\mathfrak{t}) \mbox{ and } \pi(\xi) \mbox{ is nontrivial}\},\] where $\pi$ is the map from $HF^\infty(Y,\mathfrak{t})$ to $HF^+(Y,\mathfrak{t})$ in (\ref{eq:les}).

In \cite{Manolescu-Owens:2007-1}, Manolescu and Owens showed that the $d$-invariant of the double cover of $S^3$ branched over a knot $K$ is a concordance invariant for $K$:
\[\delta(K){:}=2d(\Sigma(K),\mathfrak{t}_0),\]
where $\Sigma(K)$ is the branched double cover of $S^3$ along $K$, and $\mathfrak{t}_0$ is the Spin$^c$ structure of $\Sigma(K)$ such that $c_1(\mathfrak{t}_0)=0\in H^2(\Sigma(K);\mathbb{Z})$. They also proved that $\delta$ of the Whitehead double of an alternating knot $K$ is determined by $\tau(K)$:
\begin{theorem}[{\cite[Theorem 1.5.]{Manolescu-Owens:2007-1}}]\label{thm:Manolescu_Owens}
	If $K$ is alternating, then \[\delta(D(K))=-4\max\{\tau(K),0\}.\]
\end{theorem}
Therefore, it easily follows from the theorem and the computation of $\tau$ in the previous section that $\delta(D(T_{2,2m+1})){=}-4m$. See also Section \ref{subsec:dinvariant}.

The $d$-invariant of the 1-surgery of $S^3$ along the knot $K$, $d(S^3_1(K))$, is also a concordance invariant studied by Peters \cite{Peters:2010-1}. There is an algorithm of computing $d(S^3_1(K))$, provided knowledge of $CFK^\infty(K)$. See also \cite[Section 3.2.2.]{Hom:2017}.

Let $C$ be $CFK^\infty(K)$. Consider the following inclusion map 
\[v^-_0\colon C\{\max(i,j)\leq0\}\rightarrow C\{i\leq0\},\]
then 
\begin{equation}\label{eq:d_1}
dS^3_1(K)=-2\dim_\F(\coker(v^-_0)_*).
\end{equation}

\section{The invariants $\delta$ of iterated Whitehead doubles}\label{sec:generalization}
In this section, we use a genus-bound property of the concordance invariant $dS^3_1$ to show that, provided that $|\delta(D(K))|>8$, $D(K)$ and $D^n(K)$ are not smoothly concordant for each $n\geq 2$. Secondly, we present formulas to compute $dS^3_1(K)$ and $\delta(D(K))$ for a given staircase complex of $K$ introduced in Section \ref{sec:staircase}.

\subsection{A bound on $\delta$ and proof of Theorem \ref{thm:general}}
First, we present a lemma that relates the $\delta$-invariant of a Whitehead double to $dS^3_1$.
\begin{lemma}
	\label{lemma:delta}
	For any knot $K$, $\delta(D(K))=2dS^3_1(K\#K^r)$.
\end{lemma}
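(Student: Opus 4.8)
The plan is to pass through the two definitions $\delta(D(K)) = 2d(\Sigma(D(K)),\mathfrak{t}_0)$ and $dS^3_1(K\#K^r) = d(S^3_1(K\#K^r))$, so that the lemma becomes equivalent to the single identity
$$d(\Sigma(D(K)),\mathfrak{t}_0) = d(S^3_1(K\#K^r)).$$
I would establish this in two steps: first a geometric identification of the branched double cover $\Sigma(D(K))$ as a half-integer surgery on $K\#K^r$, and then a Heegaard Floer computation showing that this half-integer surgery and the integer surgery $S^3_1(K\#K^r)$ carry the same correction term.

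For the geometric step I would use the Montesinos trick. Changing the single clasp crossing of $D(K)$ unknots it, so $D(K)$ is turned into the unknot $U$ by one crossing change; since $\Sigma(U)=S^3$, the trick identifies $\Sigma(D(K))$ with $\pm\tfrac12$-surgery on the lift $\tilde\gamma\subset S^3$ of the crossing arc. The key point is to identify $\tilde\gamma$. The two strands meeting at the clasp are the two oppositely-oriented parallel copies of the companion $K$ carried by the (untwisted, winding-number-zero) Whitehead pattern; under the branched double covering these two arcs are joined into a single circle that runs once along $K$ and once back along $K$ with the reversed orientation, i.e. $\tilde\gamma\cong K\#K^r$. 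Tracking the sign of the positive clasp pins the coefficient down to $+\tfrac12$, so $\Sigma(D(K))\cong S^3_{1/2}(K\#K^r)$. Note that $S^3_{1/2}(K\#K^r)$ is an integer homology sphere (consistent with $\det D(K)=1$, since $D(K)$ has the Alexander--Conway polynomial of the unknot), hence it carries a unique spin$^c$ structure, which is automatically $\mathfrak{t}_0$; the same holds for $S^3_1(K\#K^r)$, so no spin$^c$ bookkeeping is needed.

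For the Floer-homological step I would invoke the rational-surgery formula for correction terms. For any knot $J$ and any $n\geq 1$ one has $d(S^3_{1/n}(J)) = -2V_0(J)$, where $V_0(J)$ is the Ozsv\'ath--Szab\'o--Rasmussen nonnegative integer extracted from $CFK^\infty(J)$; in particular the right-hand side is independent of $n$, so $d(S^3_{1/2}(J)) = d(S^3_1(J))$. Applying this with $J=K\#K^r$ and combining with the geometric step gives
$$d(\Sigma(D(K)),\mathfrak{t}_0) = d(S^3_{1/2}(K\#K^r)) = d(S^3_1(K\#K^r)),$$
and multiplying by $2$ yields the lemma.

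I expect the geometric step to be the main obstacle. The Montesinos trick cleanly produces \emph{some} half-integer surgery on \emph{some} knot, but verifying that the lift of the unknotting arc is exactly $K\#K^r$ (with the orientation reversed on one summand, rather than $K\#K$) and fixing the sign of the surgery coefficient from the positive clasp requires a careful analysis of the covering involution on the pattern solid torus; the homological step is then a direct application of a known surgery formula.
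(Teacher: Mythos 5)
Your proposal is correct and follows essentially the same route as the paper: identify $\Sigma(D(K))$ with $S^3_{1/2}(K\#K^r)$ and then show that the half-integer and integer $1$-surgeries have equal correction terms. The only cosmetic differences are that the paper simply cites Manolescu--Owens for the branched-cover identification (which they prove by the Montesinos trick you sketch) and obtains $d(S^3_{1/2}(J))=d(S^3_1(J))$ from their statement $d(J_{-1/2})=d(J_{-1})=2h_0(J)$ together with orientation reversal, rather than from the $V_0$ surgery formula directly.
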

\begin{proof}
	Let $S^3_p(K)$ denote the 3-manifold obtained by $p$-surgery of $S^3$ along a knot $K$. Manolescu-Owens showed that $d(S^3_{-1/2}(K))=d(S^3_{-1}(K))$ for any knot in the proof of \cite[Proposition 6.2]{Manolescu-Owens:2007-1}; in fact both of them equal $2h_0(K)$, where $h_0(K)$ is an invariant defined by Rasmussen in \cite[Section 7.2.]{Rasmussen:2003-1}. Thus, using the behaviour of $d$-invariants under orientation reversal \cite[Propostion 4.2]{Ozsvath-Szabo:2003-1}, we have
	\begin{equation*}
	\begin{split}
	d(S^3_{1/2}(K))&=-d(S^3_{-1/2}(-K))\\
	&=-d(S^3_{-1}(-K))\\
	&=d(S^3_1(K)).
	\end{split}
	\end{equation*}
	
	Recall that the double cover of $S^3$ branched over $D(K)$, $\Sigma(D(K))$, can be obtained by $1/2$-surgery along $K\#K^r$ in $S^3$, where $K^r$ is the knot $K$ with its orientation reversed, see \cite[Proposition 6.1]{Manolescu-Owens:2007-1}. From the definition of $\delta$-invariant,
	\begin{equation*}
	\begin{split}
	\delta(D(K))&:=2d(\Sigma(D(K)))\\
	&=2d(S^3_{1/2}(K\#K^r))\\
	&=2d(S^3_1(K\#K^r)).
	\end{split}
	\end{equation*} 
\end{proof}

This lemma, together with the genus bound property of $dS^3_1$, gives a bound on $\delta$-invariant of iterated Whitehead double knots.
\begin{proposition}
	$\delta(D^n(K))\geq-8$ for any knot $K$ and $n\geq2$.
\end{proposition}
\begin{proof}
	It is shown in \cite[Theorem 1.5.]{Peters:2010-1} that $-dS^3_1(K)/2$ is a lower bound for the slice genus, $g^*(K)$, of $K$, and note that the slice genus of $D(K)$ is at most 1 for any knot $K$. Hence, for $n\geq2$,
	\[-\delta(D^n(K))=-2dS^3_1(D^{n-1}(K)\#D^{n-1}(K)^r)\leq 4g^*(D^{n-1}(K)\#D^{n-1}(K)^r)\leq 8.\]
\end{proof}

\begin{proof}[Proof of Theorem \ref{thm:general}]
	Therefore, if $\delta(D(K))<-8$, or equivalently $dS^3_1(K\#K)<-4$, then $D(K)$ and $D^n(K)$ are not smoothly concordant. (According to \cite[Theorem 1.5]{Manolescu-Owens:2007-1}, $\delta(D(K))$ is nonpositive for any knot $K$.)
	
	Additionally if $\tau(K)>0$, then $\tau(D^i(K))=1$ for $i\geq1$ by Theorem \ref{thm:Hedden}. Considering the homomorphism $(\tau, \delta)$ from $\con$ to $\bz\oplus\bz$, we see that $D(K)$ and $D^n(K)$ are linearly independent in $\con$ for each $n\geq2$.
\end{proof}

\subsection{$dS^3_1(K)$ and $\delta(D(K))$ of a Staircase Complex}
\label{subsec:dinvariant}

If a knot admits a knot Floer complex generated by a staircase complex (equivalently, $L$-space knots), then its $dS^3_1$-invariant can be easily obtained. For the $d$-invariants of higher surgery coefficients, we refer to \cite[Section 4.2]{Borodzik-Livingston:2014-1}. 

\begin{proposition}
	Suppose the knot Floer complex of $K$ can be given by a staircase complex $\mbox{St}(K)$, then
	\[d(S^3_1(K))=-2\min_{(i,j)\in \mathrm{Vert}(\mathrm{St}(K))}\max\{i,j\}\]
	\[\delta(D(K))/2=d(S^3_1(K\#K))=-2\min_{(i,j),(k,l)\in \mathrm{Vert}(\mathrm{St}(K))}\max\{i+k,j+l\},\]
	where $\mathrm{Vert}(\mathrm{St}(K))$ is the set of the coordinates of the generators of $\mathrm{St}(K)$.
	\label{prop}
\end{proposition}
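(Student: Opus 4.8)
The plan is to use the Peters algorithm formula in Equation~(\ref{eq:d}) and translate the condition $[U^{n+1}\xi]=0$ into a geometric condition on the staircase complex. Recall that for a staircase complex $\mathrm{St}(K)$, the generator $\xi$ of $H_*(CFK^\infty(K)_{\{i=0\}})$ can be taken to be (a $U$-translate of) one of the ``outer corner'' generators of the staircase, namely one of the \emph{vertices} of the staircase (the generators, as opposed to the edge-midpoints). The key observation is that $U^{n+1}\xi$ becomes a boundary in $H_*(CFK^\infty(K)_{\{i\geq 0 \text{ or } j\geq 0\}})$ precisely when the $U$-shifted image of the generator has been pushed far enough down-left that it is killed. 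Concretely, $U^{n+1}$ translates a generator sitting at $(i,j)$ to $(i-n-1,\,j-n-1)$, and the quotient complex $CFK^\infty(K)_{\{i\geq 0 \text{ or } j\geq 0\}}$ forgets everything in the third quadrant. So the minimal $n$ for which the class dies is governed by how far down-left one must push before \emph{every} representative of the homology class lies in the forgotten region, which for a staircase reduces to the minimum over generators of $\max\{i,j\}$.

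First I would set up coordinates on $\mathrm{St}(K)$ so that the vertices $\mathrm{Vert}(\mathrm{St}(K))$ are the generators, with the top-left vertex on the $i=0$ axis and bottom-right on the $j=0$ axis as in Section~\ref{sec:staircase}. The class $\xi\in H_*(CFK^\infty(K)_{\{i=0\}})$ is represented by a cycle supported on the vertices; since the staircase has trivial homology except for the single surviving tower, $\xi$ is represented by the sum (or an appropriate single generator after a $U$-shift) of the vertex generators lying on the relevant axis. I would then compute, for each vertex at position $(i,j)$, the smallest power of $U$ needed so that $U^{n+1}$ of that generator lands in the third quadrant, i.e.\ so that $\max\{i-(n+1),\,j-(n+1)\}<0$, which happens when $n+1>\max\{i,j\}$, i.e.\ $n=\max\{i,j\}$ (accounting for the integrality and the minimal-$n$ convention). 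Taking the minimum over all vertices and plugging into Equation~(\ref{eq:d}) yields $d(S^3_1(K))=-2\min_{(i,j)\in\mathrm{Vert}(\mathrm{St}(K))}\max\{i,j\}$.

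For the second formula I would invoke the connected-sum formula $CFK^\infty(K\#K)\cong CFK^\infty(K)\otimes CFK^\infty(K)$ (as in the proof of Theorem~\ref{thm:easy}, via \cite[Theorem 7.1.]{Ozsvath_Szabo_knot}), together with Lemma~\ref{lemma:delta} which gives $\delta(D(K))/2=d(S^3_1(K\#K))$ after observing $CFK^\infty(K)\cong CFK^\infty(K^r)$. The generators of the tensor product are products of pairs of vertices, sitting at filtration level $(i+k,\,j+l)$ for vertices $(i,j)$ and $(k,l)$ of $\mathrm{St}(K)$. Although $CFK^\infty(K\#K)$ is no longer literally a staircase, the Peters algorithm depends only on the direct summand carrying the homology, and the surviving generator of $H_*\big(CFK^\infty(K\#K)_{\{i=0\}}\big)$ is again represented in terms of vertex-pairs. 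Applying exactly the same down-left pushing argument to these product generators gives the minimum over pairs of $\max\{i+k,\,j+l\}$, hence the stated formula.

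The main obstacle I expect is making rigorous the claim that only the vertex generators (and their products) control the computation of $d(S^3_1(\,\cdot\,))$, rather than the full complex. For the single staircase this is essentially the content of \cite[Section 4.2]{Borodzik_Livingston}, so I would lean on that reference. For the tensor product the subtlety is that the cycle representing $\xi$ may be a nontrivial \emph{sum} of product generators, and one must check that the minimal killing power of $U$ is achieved on the ``best'' summand and that no cancellation in the quotient complex allows the class to die earlier or forces it to survive longer; this requires a careful bookkeeping of which product generators are cycles versus boundaries in $CFK^\infty(K\#K)_{\{i\geq 0 \text{ or } j\geq 0\}}$. I would handle this by reducing the tensor product to a sum of a staircase-type summand and an acyclic complex (paralleling Theorem~\ref{thm:main}), so that the computation again localizes on a single surviving tower whose generators are exactly the vertex-pairs, at which point the $\max\{i+k,\,j+l\}$ minimization follows as before.
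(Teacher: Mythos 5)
Your overall strategy is the paper's: apply Peters' formula (\ref{eq:d}) and find the least $n$ for which $U^{n+1}$ of a representative of the generator of $H_*(CFK^\infty(K)_{\{i=0\}})$ falls into the subcomplex $CFK^\infty(K)_{\{i<0 \text{ and } j<0\}}$. Two points need repair. First, the class $[U^{n+1}\xi]$ dies in the quotient as soon as \emph{some} cycle homologous to $U^{n+1}\xi$ lies in the third quadrant, not when \emph{every} representative does; with ``every'' you would be computing a maximum over vertices rather than the minimum you write down. The correct mechanism is: (i) each grading-$0$ generator $\eta$ at $(i,j)$ is a cycle homologous to $\xi$, so $U^{\max\{i,j\}+1}\eta$ is a representative lying in the subcomplex, which gives the upper bound; and (ii) every cycle homologous to $U^{n+1}\xi$ is $U^{n+1}\sum_{\eta\in S}\eta$ for an odd-cardinality set $S$ of grading-$0$ generators, and a sum of distinct generators lies in the subcomplex only if each summand does, which rules out an earlier death by cancellation --- the very worry you raise for the connected sum, and which is settled the same easy way in both cases. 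Second, the formula minimizes over all of $\mathrm{Vert}(\mathrm{St}(K))$, including the grading-$(+1)$ inner corners, which are \emph{not} homologous to $\xi$ and hence play no role in (i); you must add the paper's one-line observation that, since the differential strictly drops a filtration coordinate, $\min_{(i,j)}\max\{i,j\}$ over all vertices is always realized at a grading-$0$ vertex, so the two minima coincide.

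For the second formula, your proposed detour through a splitting of $\mathrm{St}(K)\otimes\mathrm{St}(K)$ into a staircase plus an acyclic summand is unnecessary and is not something Theorem \ref{thm:main} supplies for a general tensor square. The paper argues directly: the tensor complex has the form $0\rightarrow\mathbb{F}^{k^2}_{(+2)}\rightarrow\mathbb{F}^{2k(k+1)}_{(+1)}\rightarrow\mathbb{F}^{(k+1)^2}_{(0)}\rightarrow 0$, and every grading-$0$ product generator $\eta\otimes\eta'$ is a cycle homologous to $\xi\otimes\xi$ (in characteristic $2$, $\eta\otimes\eta'+\xi\otimes\xi=\partial(\beta\otimes\eta'+\xi\otimes\beta')$ where $\partial\beta=\eta+\xi$ and $\partial\beta'=\eta'+\xi$), after which the argument of the first part applies verbatim with coordinates $(i+k,\,j+l)$. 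With these adjustments your proof coincides with the paper's.
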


\begin{proof}
	Suppose that $CFK^\infty(K)$ is generated by $\mbox{St}(K)$, then the top left element in $\mbox{St}(K)$ represents the generator of $H_*({CFK}^\infty(K)_{\{i\leq0\}})\cong\mathbb{F}[U]$. The chain complex $\mbox{St}(K)$ has the form $0\rightarrow\mathbb{F}^k_{(+1)}\rightarrow \mathbb{F}_{(0)}^{k+1}\rightarrow 0$. For $\eta$ with $(i,j)$-coordinates, $U^{\max\{i,j\}+1}\eta$ lies in the subcomplex $CFK^\infty(K)_{\{i<0 \mbox{ and } j<0\}}$, whereas $U^{\max\{i,j\}}\eta$ does not. Note also that since $\mathrm{St}(K)$ is a $\bz\oplus\bz$-filtered complex, $\min_{(i,j)\in \mathrm{Vert}(\mathrm{St}(K))}\max\{i,j\}$ is realized by the elements in $\mathbb{F}_{(0)}^{k+1}$, not ones in $\mathbb{F}^k_{(+1)}$. Hence, the first formula follows from the Eq.\@ (\ref{eq:d_1}). 
	
	Although $CFK^\infty(K\#K)$ is not generated by a staircase complex, it can be constructed from the tensor complex, $\mbox{St}(K)\otimes \mbox{St}(K)$ by the connected-sum formula \cite[Theorem 7.1]{Ozsvath-Szabo:2004-1}. The coordinates of the generators in $\mbox{St}(K)\otimes \mbox{St}(K)$ are given by the sums of a pair of coordinates of the generators of $\mbox{St}(K)$. The complex $\mbox{St}(K)\otimes \mbox{St}(K)$ has the form $0\rightarrow \mathbb{F}^{k^2}_{(+2)}\rightarrow\mathbb{F}^{2k(k+1)}_{(+1)}\rightarrow\mathbb{F}^{(k+1)^2}_{(0)}\rightarrow 0$, and the generators with $(0)$-grading are homologous to the generator of $H_*({CFK}^\infty(K\#K)_{\{i=0\}})\cong\mathbb{F}$. See Fig.\ \ref{fig:tensor} for the tensor complex of two copies of $\mbox{St}(1,2,2,1)$. Therefore, we get the second formula similarly.
\end{proof}

\begin{figure}[h]
	\centering
	\includegraphics[width=0.5\textwidth]{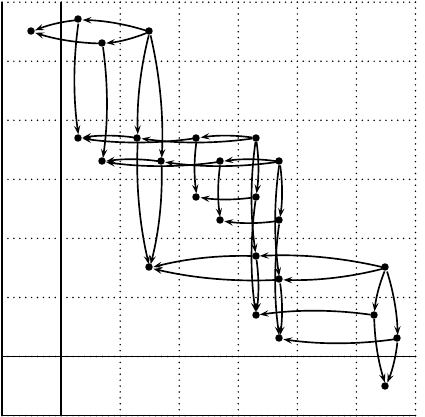}
	\caption{A diagram of the complex $\mbox{St}(T_{3,4})\otimes\mbox{St}(T_{3,4})$ generating $CFK^\infty(T_{3,4}\#T_{3,4})$}
	\label{fig:tensor}
\end{figure}

For example, since $\mbox{St}(T_{2,2m+1})=(1,\dots,1)$ of length $2m+1$, one can compute \[\delta(D(T_{2,2m+1}))=-4m.\] In the case of $(3,4)$ torus knot, $\mbox{St}(T_{3,4})=(1,2,2,1)$, and so \[\mbox{Vert}(T_{3,4})=\{(0,3),(1,3),(1,1),(3,1),(3,0)\}.\]
Thus $\delta(D(T_{3,4}))=-8$, and hence we cannot show that $D(T_{3,4})$ and $D^2(T_{3,4})$
are not concordant, using Theorem \ref{thm:general}. Note that there are many knots such that $\delta(D(K))=-8$: for example, any knot $K$ whose $CFK^\infty$ is generated by $\mbox{St}(1,n,n,1)$. 

\section{The full knot Floer chain complex of $D(T_{2,2m+1})$}
\label{sec:computation}
Recently, Hedden, Kim and Livingston showed that $CFK^\infty(D(T_{2,3}))$ is chain homotopy equivalent to $CFK^\infty(T_{2,3})\oplus A$ for some acyclic complex $A$, \cite[Proposition 6.1.]{Hedden-Kim-Livingston:2016-1}. See also \cite[section 9.1]{Cochran-Harvey-Horn:2013-1}. In this section we will generalize this result to $T_{2,2m+1}$ for $m\geq 1$, and furthermore the filtered chain homotpy type of $CFK^\infty(D(T_{2,2m+1}))$ will be completely determined.

Before proving Theorem \ref{thm:complex}, recall the following useful lemma regarding how a basis change in a filtered chain complex over $\mathbb{F}$ affects the diagram of a knot Floer chain complex.

\begin{lemma}[{\cite[Lemma A.1.]{Hedden-Kim-Livingston:2016-1}}]
	Let $C_*$ be a knot Floer complex with a 2-dimensional arrow diagram $D$ given by an $\mathbb{F}$-basis. Suppose that $x$ and $y$ are two basis elements of the same grading such that each of the $i$ and $j$ filtrations of $x$ is not greater than that of $y$. Then the $\mathbb{Z}\oplus\mathbb{Z}$ filtered basis change given by $y'=y+x$ gives rise to a diagram $D'$ of $C_*$ which differs from $D$ only at $y$ and $x$ as follows:
	\begin{itemize}
		\item Every arrow from some $z$ to $y$ in $D$ adds an arrow from $z$ to $x$ in $D'$
		\item Every arrow from $x$ to some $w$ in $D$ adds an arrow from $y'$ to $w$ in $D'$
	\end{itemize}
\end{lemma}

We use the above lemma for the purpose of removing certain boundary arrows in chain complexes over $\mathbb{F}$. For example, the proposition below will be useful for proving Theorem \ref{thm:complex}. 

\begin{proposition}
	Suppose $C$ is one of the $\mathbb{Z}\oplus\mathbb{Z}$ filtered chain complexes over $\mathbb{F}$ given by the diagrams depicted in Fig.\@ \ref{fig:uvw} with any possible combination of dotted arrows that make $C$ a chain complex. Then all dotted arrows in $C$ can be removed by a basis change.
	\label{cor2}
\end{proposition}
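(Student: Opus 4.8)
The plan is to view each dotted arrow as an undesired component of the differential and to cancel these components one at a time, using the change of basis supplied by the preceding Lemma. Since the phrase ``any possible combination of dotted arrows'' describes only finitely many complexes sitting on top of the fixed diagrams of Figure~\ref{fig:uvw}, I would first fix one of those diagrams and record, for every generator, its homological grading and its $(i,j)$-filtration level. The solid arrows are to be kept throughout; the dotted arrows are exactly the differentials to be removed.

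The cancellation mechanism is as follows. A dotted arrow runs from a generator $z$ to a generator $b$ lying one homological grading lower, and by inspection of Figure~\ref{fig:uvw} each such $b$ also receives a solid arrow from some generator $a$ occupying the \emph{same} grading as $z$ and with both filtration levels no larger than those of $z$. The hypotheses of the Lemma are then met for the pair $(a,z)$, and the filtered basis change $z'=z+a$ replaces the differential of $z$ by $\partial z+\partial a$; over $\F$ the two contributions landing on $b$ (the dotted arrow out of $z$ and the image under the Lemma of the solid arrow out of $a$) cancel, so the dotted arrow $z\to b$ disappears. I would check in each case that the grading equality $\mathrm{gr}(a)=\mathrm{gr}(z)$ and the filtration inequalities indeed hold, these being the only constraints the Lemma imposes.

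The main obstacle is that the very same basis change also, by the second clause of the Lemma, adds an arrow $z'\to w$ for every generator $w$ that $a$ already maps to, and adds an arrow $v\to a$ for every $v$ mapping into $z$; so cancelling one dotted arrow can create new arrows. To keep this cascade under control I would order the generators by a partial order refining the bigrading, and arrange the sequence of basis changes so that each step introduces only arrows that are either already present as solid arrows (hence cancel immediately over $\F$) or lie strictly later in the order and are cleared by subsequent steps. Verifying that such a triangular ordering exists for each of the diagrams in Figure~\ref{fig:uvw}, and that no step violates the grading or filtration hypotheses, is the crux; once it is in place the process terminates after finitely many basis changes and leaves a diagram with no dotted arrows, proving the Proposition. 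Carrying this out explicitly for the handful of base diagrams, for every subset of dotted arrows, is the routine but case-heavy part of the argument.
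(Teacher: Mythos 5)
There is a genuine gap: your argument never uses $\partial^2=0$, and without that constraint the statement you are actually proving is false. Take diagram (I) of Figure~\ref{fig:uvw} and write, as in the paper, $\partial a=b+c+Ax+By$, $\partial b=d+Cz$, $\partial c=d+Dz$. Every filtered basis change permitted by the basis-change lemma ($a'=a+w$, $b'=b+x$, $b'=b+y$, $c'=c+x$, $c'=c+y$, $d'=d+z$; these are the only same-grading, filtration-compatible elementary changes) toggles the coefficients $A,B,C,D$ in \emph{pairs}. For instance $a'=a+w$ adds $\partial w=x+y$ to $\partial a$ and hence flips both $A$ and $B$, while $b'=b+x$ flips $A$ (first clause of the lemma, applied to the solid arrow $a\to b$) and $C$ (second clause, applied to $x\to z$) simultaneously. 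Consequently the parity $A+B+C+D\in\mathbb{F}$ is invariant under every move at your disposal, and the configuration with a single dotted arrow, say $A=1$ and $B=C=D=0$, can never be cleared: your proposed cancellation by $a'=a+w$ merely trades $A=1$ for $B=1$, and iterating produces the cycle $A\to B\to D\to C\to A$ rather than a terminating induction. This also undercuts the ``triangular ordering'' you hope for: the four dotted-arrow slots sit at the same bigrading level, so the cascade does not move strictly later in any order refining the bigrading.

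The missing idea is the one the paper leads with: because $C$ is a chain complex, $\partial^2a=(A+B+C+D)z=0$ forces $A+B+C+D=0$ (Equation~(\ref{eq:1})), so ``any possible combination of dotted arrows'' means only the even-parity configurations, and these are exactly the orbit of $(0,0,0,0)$ under the pair-toggles above. The paper then simply enumerates the seven nonzero admissible configurations and exhibits an explicit basis change for each, with the analogous $\partial^2=0$ constraints doing the corresponding work for diagrams (II) and (III). Your local cancellation mechanism is a correct description of the individual moves, but the proof cannot be completed without first invoking $\partial^2=0$ to exclude the configurations that are genuinely non-removable.
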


\begin{proof}
	First, consider the complex (I). Suppose that
	\begin{align*}
		\partial a&=b+c+Ax+By,\\ 
		\partial b&=d+Cz,\mbox{and}\\ 
		\partial c&=d+Dz
	\end{align*}
	for some $A$, $B$, $C$ and $D$ in $\mathbb{F}$. Since $\partial^2=0$,
	\begin{equation}\label{eq:1}
		\begin{split}
			0=\partial^2a&=\partial(b+c+Ax+By)\\
			&=(A+B+C+D)z.
		\end{split}
	\end{equation}
	Therefore, the coefficients have to satisfy the equation that $A+B+C+D=0$. Now, we consider every possible coefficient of $A$, $B$, $C$ and $D$ in $\mathbb{F}$ satisfying the equation and show that each case can be transformed to have $A=B=C=D=0$, as desired, after proper change of basis.
	
	\begin{itemize}
		\item If $A{=}B{=}1$ and $C{=}D{=}0$, change the basis by $b'{=}b+x$, $c'{=}c+y$ and $d'{=}d+z$.
		\item If $A{=}C{=}1$ and $B{=}D{=}0$, change the basis by $b'{=}b+x$.
		\item If $A{=}D{=}1$ and $B{=}C{=}0$, change the basis by $b'{=}b+x$ and $d'{=}d+z$.
		\item If $B{=}C{=}1$ and $A{=}D{=}0$, change the basis by $c'{=}c+y$ and $d'{=}d+z$.
		\item If $B{=}D{=}1$ and $A{=}C{=}0$, change the basis by $c'{=}c+y$.
		\item If $C{=}D{=}1$ and $A{=}B{=}0$, change the basis by $d'{=}d+z$.
		\item If $A{=}B{=}C{=}D{=}1$, change the basis by $b'{=}b+x$ and $c'{=}c+y$.
	\end{itemize}
	
	Similar argument is applied to remove any combination of possible dotted arrows in the complexes (II) and (III).
\end{proof}

\begin{figure}[htp!]
	\includegraphics[width=\textwidth]{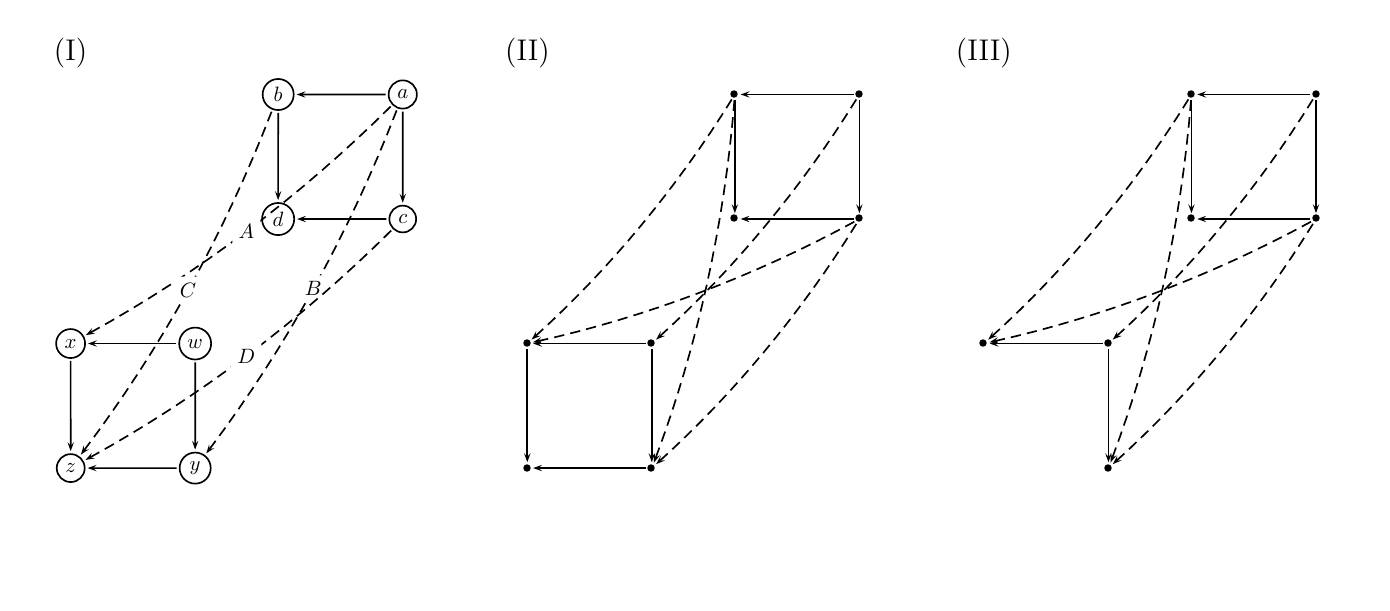}
	\caption{Any possible combination of dotted arrows can be removed by a basis change.}
	\label{fig:uvw}
\end{figure}

\begin{proof}[Proof of Theorem \ref{thm:complex}]
	Fix $m\geq 1$. Since $T_{2,2m+1}$ is an alternating knot, $\widehat{CFK}(T_{2,2m+1})$ is easily determined by the Alexander polynomial and signature of $T_{2,2m+1}$  as follows \cite{Ozsvath-Szabo:2003-2}:
	\begin{equation*}
	\widehat{CFK}(T_{2,2m+1}, j)=
	\begin{cases}
	\F\langle e_{m-j}\rangle&-m\leq j\leq m\\
	0& \text{otherwise}
	\end{cases}
	\end{equation*}
	$\F\langle e_{m-j}\rangle$ is supported in homological degree $(m-j)$, and $\partial e_{-2k+1}=e_{-2k}$ for $1\leq k\leq m$ and other boundary maps are trivial.
	
	In \cite[Theorem 1.2]{Hedden:2007-1} Hedden gave a formula to compute $\widehat{HFK}$ of Whitehead double of a knot $K$ provided $\widehat{CFK}$ of $K$. Let $D$ be $D(T_{2,2m+1})$ for $m\geq 1$. Applying Hedden's formula to $T_{2,2m+1}$, we have
	\begin{equation*}
	\widehat{HFK}_*(D,j)=
	\begin{cases}
	\mathbb{F}^{2m}_{(0)}\oplus\mathbb{F}^2_{(-1)}\oplus\mathbb{F}^2_{(-3)}\oplus\dots\oplus\mathbb{F}^2_{(-2m+1),}&j=1\\
	\mathbb{F}^{4m-1}_{(-1)}\oplus\mathbb{F}^4_{(-2)}\oplus\mathbb{F}^4_{(-4)}\oplus\dots\oplus\mathbb{F}^4_{(-2m),}&j=0\\
	\mathbb{F}^{2m}_{(-2)}\oplus\mathbb{F}^2_{(-3)}\oplus\mathbb{F}^2_{(-5)}\oplus\dots\oplus\mathbb{F}^2_{(-2m-1),}&j=-1\\
	0&otherwise.
	\end{cases}
	\end{equation*}
	We assign an $\mathbb{F}$-basis to each summand in the direct decomposition as below:
	\begin{multline*}
	\widehat{HFK}_*(D,j)\\=
	\begin{cases}
	\langle x^0_1,\dots,x^0_{2m}\rangle\oplus\langle u^{-1}_{1,1},u^{-1}_{1,2}\rangle\oplus\dots\oplus\langle u^{-2m+1}_{m,1},u^{-2m+1}_{m,2}\rangle&j=1\\
	\langle y^{-1}_1,\dots,y^{-1}_{4m-1}\rangle\oplus\langle v^{-2}_{1,1},\dots,v^{-2}_{1,4}\rangle\oplus\dots\oplus\langle v^{-2m}_{m,1},\dots,v^{-2m}_{m,4}\rangle&j=0\\
	\langle z^{-2}_1,\dots,z^{-2}_{2m}\rangle\oplus\langle w^{-3}_{1,1},w^{-3}_{1,2}\rangle\oplus\dots\oplus\langle w^{-2m-1}_{m,1},w^{-2m-1}_{m,2}\rangle&j=-1\\
	0&otherwise,
	\end{cases}\label{eq:generators}
	\end{multline*}
	where the superscript of a generator represents its absolute grading.
	
	Since $\widehat{HFK}_*(D)$ is homotopic equivalent to the $\widehat{CFK}(D)$ \cite[Lemma 4.5]{Rasmussen:2003-1}, we assume that \[CFK^\infty(D)_{\{(0,j)\}}=\widehat{HFK}_*(D,j)\] and \[CFK^\infty(D)_{\{(i,j)\}}\cong U^{-i}CFK^\infty(D)_{\{(0,j)\}}=\widehat{HFK}_*(D,j-i).\] Now, we investigate all differentials in $CFK^\infty(D)$ by using the following facts: $\partial^2=0$, $H_*(\widehat{CFK}(D))\cong\widehat{HF}(S^3)\cong \mathbb F_{(0)}$ and $H_*(CFK^\infty(D))\cong HF^\infty(S^3)\cong \mathbb{F}[U,U^{-1}]$.
	
	First, note that there are no components of the boundary maps between generators of the same $(i,j)$-filtration since they would reduce $\widehat{HFK}_*(D)$. Thus, we can decompose the boundary maps $\partial$ to the vertical, horizontal and diagonal components, $\partial=\partial_V+\partial_H+\partial_D$. Also, we remark that it is enough to determine boundary maps of $\mathbb{F}[U,U^{-1}]$-generators in $CFK^\infty$ because the boundary map is $U$-equivariant.
	
	Similar argument from \cite[Proposition 6.1]{Hedden-Kim-Livingston:2016-1} can be used to determine $\partial_V$ and $\partial_H$ i.e. by the fact that $H_*(\widehat{CFK}(D))=\mathbb{F}_{(0)}$ and using grading consideration, after changing basis, we can assume that
	\begin{align*}
		\partial_V(x^d_k)&=y^{d-1}_{k-1} &\mbox{for } k=2,\dots, 2m\\
		\partial_V(y^{d-1}_{2m+l-1})&=z^{d-2}_l &\mbox{for } l=1,\dots,2m.\\
		\partial_V(u^{d-1}_{p,i})&=v^{d-2}_{p,i}\mbox{ and }\partial_V(v^{d-2}_{p,i+2})=w^{d-3}_{p,i}&\mbox{for } p=1,\dots,m \mbox{ and } i=1,2,
	\end{align*}
	for $d\in 2\mathbb{Z}$ and $\partial_V$'s of other elements are trivial. Analogously, since $H_*(CFK^\infty(D)_{\{j=0\}})$ is isomorphic to $\widehat{HF}(S^3)\cong\mathbb{F}_{(0)}$ and $\partial^2=0$, the horizontal boundary components can be assumed as following:
	\begin{align*}\label{eq:v}
		\partial_H(z^d_k)&=y^{d-1}_{k-1} &\mbox{for } k=2,\dots, 2m\\
		\partial_H(y^{d-1}_{2m+l-1})&=x^{d-2}_l &\mbox{for } l=1,\dots,2m.\\
		\partial_H(w^{d-1}_{p,i})&=v^{d-2}_{p,i}\mbox{ and }\partial_H(v^{d-2}_{p,i+2})=u^{d-3}_{p,i}&\mbox{for } p=1,\dots,m \mbox{ and } i=1,2,
	\end{align*}
	and $\partial_H$'s of other elements are trivial. We drop $\mathbb{F}[U,U^{-1}]$ coefficients of generators since they are canonically determined by the grading considerations. See Fig.\@ \ref{fig:5} for a diagram. In fact we will show that we can assume there are no $\partial_D$ components for any elements as shown in Fig.\@ \ref{fig:5}.
	
	\begin{figure}[p!]
		\centering
		\includegraphics[width=0.98\textwidth]{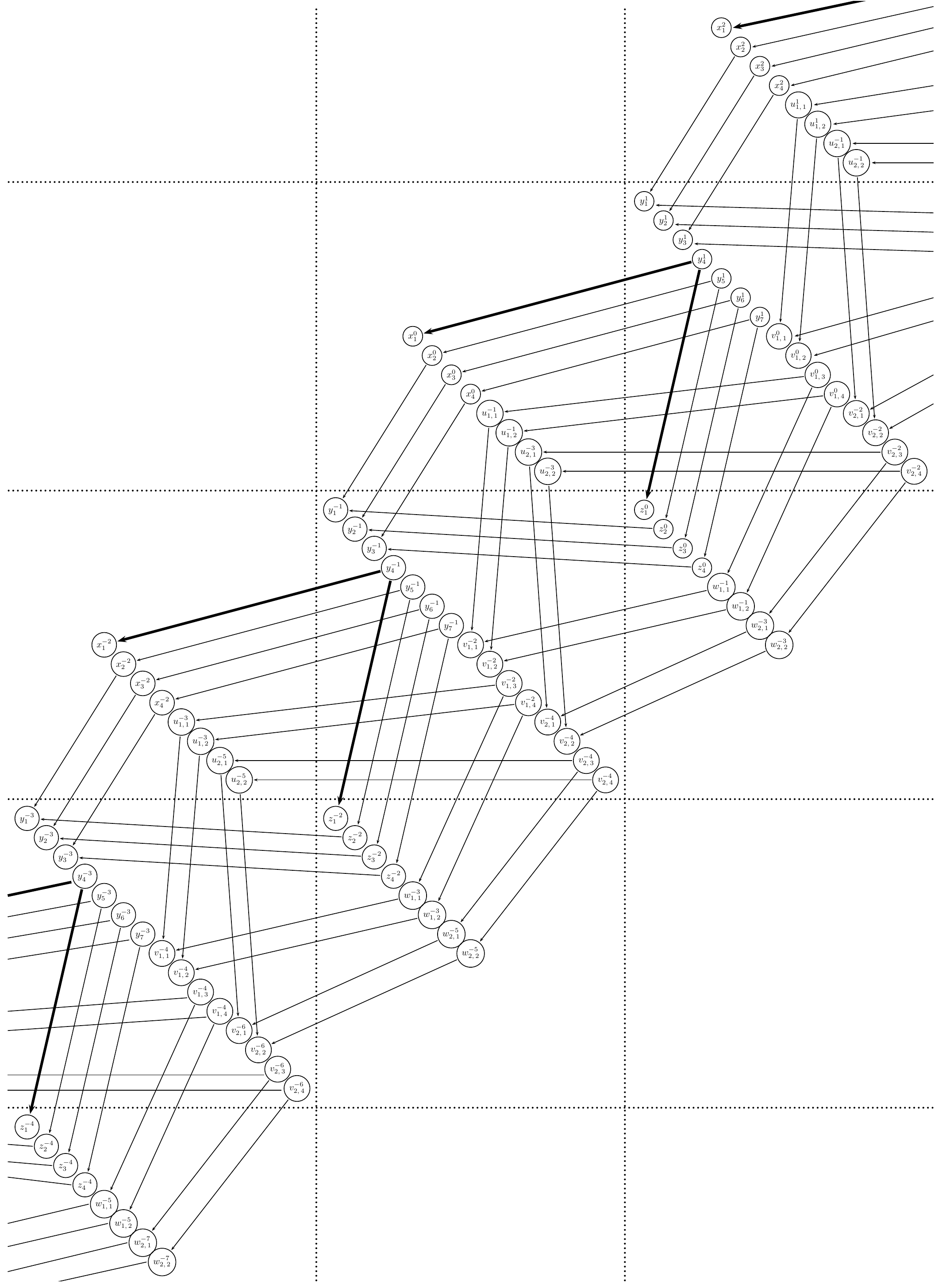}
		\caption{A diagram of $CFK^\infty(D(T_{2,5}))$. The superscript of each generator represents its grading. Note that the subcomplex generated by the bold arrows is a direct summand of the chain complex and isomorphic to $CFK^\infty(T_{2,3})$, and the homology of the complement of the summand is trivial.}
		\label{fig:5}
	\end{figure}
	
	We can split $CFK^\infty(D)$ as following disjoint subsets:
	\begin{align*}
		A^d_{p,i}&:=\{v^{d}_{p,i+2},u^{d-1}_{p,i},w^{d-1}_{p,i},v^{d-2}_{p,i}\},\\
		B^d_q&:=\{y^{d-1}_{2m+q},x^{d-2}_{q+1},z^{d-2}_{q+1},y^{d-3}_q\},\text{and}\\
		C^d&:=\{y^{d-1}_{2m},x^{d-2}_1,z^{d-2}_1\},
	\end{align*}
	for $1\leq p\leq m$, $1\leq q\leq 2m-1$, $i=1,2$ and $d\in 2\mathbb{Z}$. Note that any arrows between subsets must be diagonal. Disregarding the diagonal arrows between subsets, each complex of $A$'s and $B$'s has four generators arranged in a square, and each complex of $C$'s has three generators which looks like $St(1,1)$. Therefore, if we remove all arrows between subsets i.e. $CFK^\infty(D)$ is a direct sum of $A$'s, $B$'s and $C$'s, the theorem follows.
	
	Define a subset of $A$ as $A'^d_{p,i}:=\{v^{d}_{p,i+m},u^{d-1}_{p,i},w^{d-1}_{p,i}\}$. Due to grading constraints on the filtered complex, we observe the following:
	\begin{itemize}
		\item $\partial_D$ of any generator in $A'^d_{p,i}$ has components only in $A^d_{k,j}$, $B^d_{q}$ and $C^d$ for $k<p$, $j=1,2$, and $q=1,\dots,2m-1$ (i.e. diagonal arrows between $A$'s going from higher to smaller first index.)
		\item $\partial_D$ of the generators in $B$'s and $C$'s are zero. 
	\end{itemize}
	These observations allow us to apply Proposition \ref{cor2} inductively to remove all diagonal arrows in the complex.
	
	We start to remove any diagonal arrows from $A'^d_{m,1}$. First, we remove all diagonal arrows going from $A'^d_{m,1}$ to $A^d_{m-1,1}$, using basis-change of the case (I) of Proposition \ref{cor2}. Differently from the propostion, there can be other components in $\partial_D$ of elements in $A'^d_{m,1}$, not in $A^d_{m-1,1}$. However, considering the grading constraints again, one can easily check that the other components cannot induce a $z$ component of $\partial^2a$ in the Eq.\@ (\ref{eq:1}), hence the equation that $A+B+C+D=0$ in the proof of the corollary still holds and we remove diagonal arrows using a basis-change in the corollary.
	
	After applying the basis-change, two types of newer diagonal arrows will be added due to arrows coming to $A'^d_{m,1}$ and arrows going from $A^d_{m-1,1}$. First note that there are no diagonal arrows coming to $A^d_{m,1}$ by the observations above ($m$ is the greatest index for $A$.) Secondly, a diagonal arrow from $A^d_{m-1,1}$ to some generator adds an arrow going from $A^d_{m,1}$ to the generator after basis-change, but note that these arrows are going to the subsets $A^d_{p,i}$ with $p<m-1$ and $i=1,2$, which we will remove later.
	
	Now, we similarly change the basis for removing diagonal arrows from $A'^d_{m,1}$ to $A^d_{m-1,2}$, $A^d_{m-2,1}$, $A^d_{m-2,2}\dots,A^d_{1,1},$ and $A^d_{1,2}$ in sequence. Then, case (II) and (III) of the corollary will be applied to remove arrows from $A'^d_{m,1}$ to $B^d_q$'s and $C^d$. The induction ends with removing any $\partial_D$ from $A'^d_{m,1}$, since there are no diagonal arrows from $A^d_{1,i}$'s, $B^d_q$'s and $C^d$.
	
	Then, we remove $\partial_D$ of $A'^d_{m,2}$, $A'^d_{m-1d,1}$, $A'^d_{m-1,2},\dots,A'^d_{1,1},$ and $A'^d_{1,2}$ likewise. After removing the diagonal arrows from $A'_{p,i}$ for all $p=1,\dots,m$ and $i=1,2$, the only remaining non-trivial $\partial_D$ are ones of $v_{p,1}$ and $v_{p,2}$. It is easy to see that $\partial_D$'s of $v_{p,1}$ and $v_{p,2}$ also vanish: $0{=}\partial^2(u_{p,i}){=}\partial(v_{p,i})$. Thus, we may assume that $\partial_D$'s of $CFK^\infty$ are all zero.
\end{proof}
\begin{remark}
	The computation in this section is possibly generalized further to any {\it $L$-space knots} \cite{Ozsvath-Szabo:2005-1} i.e. one might prove that $CFK^\infty$ of the Whitehead double of a $L$-space knot is chain homotopy equivalent to  $CFK^\infty(T_{2,3})\oplus A$ for some acyclic complex $A$. 
\end{remark}

\section{$\delta$-invariant of $D^2(T_{2,2m+1})$ and proof of Theorem \ref{thm:torus}}
Now, we are giving the proof of Theorem \ref{thm:torus}. This will follow from an explicit computation of $\delta(D^2(T_{2,5}))$.

\begin{proposition}\label{prop:delta}
	$\delta(D^2(T_{2,2m+1}))=-4$ for $m\geq1$.
\end{proposition}
\begin{proof}
	By applying Lemma \ref{lemma:delta} to $D^2(T)$, we have 
	\[\delta(D^2(T))=2dS^3_1(D(T)\#D(T)^r),\]
	where $T=T_{2,2m+1}$. Observe that $d(S^3_1(K))$ is derived from a direct summand of $CKF^\infty(K)$ containing a generator of $H_*(CFK^\infty(K)_{\{i=0\}})\cong\mathbb{F}$. In particular, if $CFK^\infty(K)$ and $CFK^\infty(K')$ differ only by an acyclic complex, then $dS^3_1(K)=dS^3_1(K')$. 
	
	Now, let us understand $CFK^\infty(D(T)\#D(T)^r)$. Since the knot Floer complex is unchanged under the orientation reversal \cite[Proposition 3.9.]{Ozsvath-Szabo:2004-1} and by the connected sum formula for knot Floer complexes in \cite[Theorem 7.1.]{Ozsvath-Szabo:2004-1},
	\[CFK^\infty(D(T)\#D(T)^r)\cong CFK^\infty(D(T))\otimes CFK^\infty(D(T)).\]
	Thus, $dS^3_1(D(T)\#D(T)^r)$ equals $dS^3_1(T_{2,3}\#T_{2,3})$ by Theorem \ref{thm:complex}. It is computed that \[dS^3_1(T_{2,3}\#T_{2,3})=-2\] as an example of the computer program, {\texttt dCalc} in \cite{Peters:2010-1}, (it can be also computed by Proposition \ref{prop}) so that $\delta(D^2(T))=-4$. 
\end{proof}

\begin{proof}[Proof of Theorem \ref{thm:torus}] 
	The linear-independency of $D(T_{2,2m+1})$ and $D^2(T_{2,2m+1})$ in $\con$ for $m\geq3$ easily follows from Theorem \ref{thm:general}. Therefore, it suffices to show that $D(T_{2,5})$ and $D^2(T_{2,5})$ can generate $\bz\oplus\bz$-summand of $\conts$ to complete the proof. 
	
	Recall that $\delta\equiv\sigma/2$ mod 4 \cite[(2.1)]{Manolescu-Owens:2007-1} and $\sigma=0$ for any knot in $\conts$. Consider the homomorphism $\psi=(\tau,\delta/4):\conts\rightarrow\mathbb{Z}\oplus\mathbb{Z}$. Since $\psi(D(T_{2,5}))=(1,-2)$ and  $\psi(D^2(T_{2,5}))=(1,-1)$ by Theorem \ref{thm:Hedden}, Theorem \ref{thm:Manolescu_Owens} and Proposition \ref{prop:delta}, $\psi$ is surjective. Therefore, $\conts$ has a $\mathbb{Z}\oplus\mathbb{Z}$ summand generated by $D(T_{2,5})$ and $D^2(T_{2,5})$.
\end{proof}

\begin{proof}[Proof of Corollary \ref{cor:bipolar}]
	By \cite[Corollary 4.9, Corollary 6.11]{Cochran-Harvey-Horn:2013-1} both $\tau$ and $\delta$ vanish for the knots in $\mathcal{C}_{\Delta}\cap\cT_0$. Now, consider the induced homomorphism $(\tau,\delta/4)\colon\mathcal{C}_{\Delta}/(\mathcal{C}_{\Delta}\cap\cT_0)\rightarrow\bz\oplus\bz$, and the surjectivity of it can be shown again by the knots, $D(T_{2,5})$ and $D^2(T_{2,5})$.
\end{proof}

For the right-handed trefoil knot, as far as the author knows, all known concordance invariants of $D(T_{2,3})$ and $D^2(T_{2,3})$ are the same, so it is still mysterious if $D(T_{2,3})$ and $D^2(T_{2,3})$ are smoothly concordant.\\

\noindent{\bf Question.} Are $D(T_{2,3})$ and $D^2(T_{2,3})$ smoothly concordant? If not, are they linearly independent in $\con$?

\begin{remark}
	This question is possibly approached by using gauge-theoretic invariants. See for example Hedden and Kirk \cite{Hedden-Kirk:2012-1}.
\end{remark}

\bibliographystyle{alpha}
\bibliography{references}{}

\end{document}